\pgfplotsset{every axis/.append style={
                    axis x line=middle,    % put the x axis in the middle
                    axis y line=middle,    % put the y axis in the middle
                    axis line style={-,color=blue}, % arrows on the axis
                    xlabel={$x$},          % default put x on x-axis
                    ylabel={$y$},          % default put y on y-axis
            }}
\def\ZZ{\mathbb{Z}}
\def\CC{\mathbb{C}}
\def\FF{\mathbb{F}}
\def\SS{\mathbb{S}}
\def\PP{\mathbb{P}}
\newtheorem{thm}{Theorem}[section]  %CHOOSE [chapter] or [section]
\newtheorem{main-thm}{Theorem}  %CHOOSE [chapter] or [section]
\newtheorem{main-conj}{Conjecture}[section]%
\newtheorem{cor}{Corollary}[section]
\newtheorem{lemma}{Lemma}[section]
\theoremstyle{remark}
\newtheorem{rem}{Remark}[section]
\theoremstyle{definition}
\newtheorem{dfn}{Definition}[section]
\newtheorem{exam}{Example}[section]
\let\c@lemma\c@thm
\let\c@prop\c@thm
\let\c@propdef\c@thm
\let\c@proper\c@thm
\let\c@problem\c@thm
\let\c@conj\c@thm
\let\c@cor\c@thm
\let\c@rem\c@thm
\let\c@dfn\c@thm
\let\c@notation\c@thm
\let\c@exam\c@thm
\title[Homotopy type of complements of fiber-type curves]{Homotopy type of complements of fiber-type curves}
\author[Jos\'e I. Cogolludo-Agust{\'i}n and Eva Elduque]{J.I.~Cogolludo-Agust{\'i}n and Eva Elduque}
\address{Departamento de Matem\'aticas, IUMA\\
Universidad de Zaragoza \\
C.~Pedro Cerbuna 12 \\
50009 Zaragoza, Spain.} 
\email{jicogo@unizar.es} 
\address{Departamento de Matem\'aticas, ICMAT\\ 
Universidad Aut\'onoma de Madrid \\
28049 Madrid, Spain.}
\email{eva.elduque@uam.es}
\begin{document}

\thanks{The authors are partially supported by the Spanish Government PID2020-114750GB-C31. 
The first author is partially supported by the Departamento de Ciencia, Universidad y Sociedad del 
Conocimiento del Gobierno de Arag{\'o}n (Grupo de referencia E22\_20R ``{\'A}lgebra y Geometr{\'i}a''). The second author is partially supported by the Ram\'on y Cajal Grant RYC2021-031526-I funded by MCIN/AEI /10.13039/501100011033 and by the European Union NextGenerationEU/PRTR} 

\subjclass[2020]{32S25, 32S55, 32S05, 32S20, 57K31} 

\begin{abstract}
In this note we study the homotopy type of the complement of a plane projective curve of fiber-type.
Roughly speaking, a curve of fiber-type is a finite union of fibers of a pencil.
Under some restrictions, a full description of their homotopy type is given.
In the context of fiber-type curves, we also partially answer in the positive a question originally posed by Libgober
on the homotopy type of complements of plane curves, namely, whether or not the homotopy type of a curve complement 
is determined by the fundamental group and its Euler characteristic.
\end{abstract}

\maketitle
%\tableofcontents

\begin{quotation}
\emph{
With gratitude we celebrate Alejandro Melle's 55th and Enrique Artal's 60th birthday.
They have both been and still are a source of inspiration and a joy to work with.
}
\end{quotation}

\section{Introduction}
In a classical paper from the 70's, M.J.~Dunwoody~\cite{Dunwoody-homotopy} exhibited two 2-complexes whose fundamental groups are
isomorphic (the group of the trefoil knot), whose Euler characteristics are both one, but whose homotopy types are not equivalent.
A decade later, A.~Libgober~\cite{Libgober-homotopytype} described the homotopy type of the complement of affine plane curves
(transversal at infinity) in terms of their braid monodromy. In this celebrated paper, Libgober posed the question about whether
or not Dunwoody's example could be realized as complements of algebraic curves. We refer to this as \emph{Libgober's Question}
(on Homotopy Type)~\cite[p.114]{Libgober-homotopytype}, namely,
\begin{quotation}
\label{quote:libgober}
% Is it true that if two projective plane curves have the same Euler characteristic and isomorphic fundamental groups,
% then they must have the same homotopy type of their complements?
\emph{
Do two curves in $\CC^2$ exist whose complements have the same fundamental group and Euler characteristic, but are not
homotopy equivalent?
}
\end{quotation}
Or more generally:
\begin{quotation}
\emph{
Is the homotopy type of the complement of a projective plane curve determined by its Euler characteristic and fundamental group?
}
\end{quotation}
Consider a curve $C\subset\PP^2$, a line $\ell\not\subset C$ and a point $P\in\ell\setminus C$. By the continuity of the roots,
the pencil $\mathcal L$ of lines passing through $P$ other than $\ell$, and away from the lines intersecting $C$ non-transversally, produces a family of braids obtained by the intersection
of the lines in $\mathcal L$ with $C$. The braid monodromy of a curve contains information that goes beyond their homotopy type,
in fact it determines its embedded topology. This was first shown for curves with nodes and cusps by Kulikov-Teicher
in~\cite{Kulikov-Teicher-braid} and later for general curves by Carmona in~\cite{Carmona-tesis}.
In this context, Artal and several authors have contributed to the study of braid monodromy, topology, and homotopy type in a
series of papers~\cite{ACO-kummer,Artal-Carmona-ji-braid,Artal-Carmona-ji-effective}.
In this note we approach the homotopy type problem from a different perspective, namely for fiber-type curves, as defined below.
%curves whose complement fibers
%over a quasiprojective curve.

Let $X$ be a smooth complex projective surface. We say that a
curve $C$ in $X$ is a \emph{fiber-type curve} in $X$ if $C$ is the closure in $X$ of a finite number of fibers of a surjective
algebraic morphism with connected generic fibers $F:U\to S$, where $U$ is the complement of a finite number of points in $X$,
and $S$ is a smooth algebraic curve. Throughout this paper, we will always assume that $U$ is the maximal domain of definition
of $F:X\dashrightarrow S$.

The topology of the complements in $X$ of fiber-type curves was explored by the authors in recent
papers~\cite{ji-Eva-Generic,ji-Eva-GOMP,ji-Eva-orbifold,ji-Eva-Zariski}.
By~\cite[Theorem 1.3]{ji-Eva-orbifold}, if a curve $C$ in $X$ satisfies that $\pi_1(X\setminus C)$ is a free product of
cyclic groups, then $C$ is a fiber-type curve. However, the converse is false: if $C=\overline{F^{-1}(B)}$ for some
finite non-empty set $B\subset S$, so that $F:X\setminus C\to S\setminus B$ is a locally trivial fibration
(in other words, if $B\supset B_F$ where $B_F$ is the set of atypical values of $F$), then $\pi_1(X\setminus C)$ is rarely
a free product of cyclic groups~\cite[Lemma 2.18, Corollary 2.19]{ji-Eva-orbifold}. On the other hand, under mild assumptions
on $F$, and if $B\cap B_F=\emptyset$, then $\pi_1\left(\PP^2\setminus \overline{F^{-1}(B)}\right)$ is a free product of
cyclic groups by \cite[Theorem 1.2]{ji-Eva-orbifold}. In this sense, the situations $B\cap B_F=\emptyset$ and $B_F\subset B$
can be seen as extreme cases.

Moreover, the examples of~\cite{ji-Eva-Zariski} show that the topology of the complement of a fiber-type curve is not determined by
local invariants of the singularities of the curve, namely, there exist infinitely many pairs of curves in $\PP^2$ with the same
combinatorics and homeomorphic tubular neighborhoods such that the fundamental groups of their complements are not isomorphic,
one of them being a free product of cyclic groups, unlike the other one.

The first purpose of this note is to partially answer the projective version of Libgober's question in the positive for the family of groups that are isomorphic to a finite free product of cyclic groups. In particular, in section~\ref{sec:libgober} we prove it for free and
finite cyclic groups. We also show a partial answer for finite free products of cyclic groups, namely if $G$ is such a product, then,
for any projective plane curve whose fundamental group is isomorphic to $G$, the Euler characteristic determines all homotopy groups of its complement.

Note that the family of fiber-type curves contains cases of curves not previously considered by Libgober in~\cite{Libgober-homotopytype},
for instance, curves in $\PP^2$ or curves in $\CC^2\equiv\PP^2\setminus L_\infty$ that are non-transversal with respect to the line at infinity~$L_\infty$ (see Example~\ref{ex:pq}).

The second purpose is to give an alternative description for the homotopy type of the complement of a fiber-type curve $C$ in $\PP^2$ which is given as a finite union of fibers of a morphism $F$. More precisely, we do this in section~\ref{sec:homotopy} in the case when
$F_{|\PP^2\setminus C}$ is a fibration, or equivalently, in the case when $C=\overline{F^{-1}(B)}$ for a non-empty finite set $B$ such that $B_F\subset B$. The description of the homotopy type is given as the $2$-complex associated to a certain presentation of the fundamental group of the complement obtained by using the fibration structure.

%As mentioned above, in this note we study the homotopy type of complements of fiber-type curves in $\PP^2$.

% In section~\ref{sec:libgober},
% we study fiber-type curves which satisfy that the fundamental group of their complement is a free product of cyclic groups.
% More concretely, we show that the homotopy groups of $\PP^2\setminus C$ are determined by its fundamental group and its Euler
% characteristic. Moreover, if $\pi_1(\PP^2\setminus C)$ is either free or cyclic, these also determine the homotopy type of
% $\PP^2\setminus C$.
%
% In section~\ref{sec:homotopy}, we study the homotopy type of fiber-type curves $C\subset X$ containing all the atypical fibers of $F$,
% that is, the situation $B_F\subset B$ considered above. In that case, the fibration induced by the restriction of $F$ to $X\setminus C$
% allows us to describe a $2$-dimensional CW-complex with the same homotopy type as $X\setminus C$.

\section{Preliminaries}

\begin{dfn}
	A continuous map $p:X\to Y$ between topological spaces is a \emph{quasifibration} if the induced morphisms
	$$
	\pi_i(X,p^{-1}(y),x)\to\pi_i(Y,y)
	$$
	are isomorphisms for all $i\geq 1$,  $y\in Y$ and all $x\in p^{-1}(y)$ (where the word ``isomorphism'' is interpreted as ``bijection'' if $i=1$, the case in which the domain is not necessarily a group).
\end{dfn}

By \cite[Theorem 4.41]{hatcher}, fibrations are examples of quasifibrations, and the long exact sequence of homotopy groups associated to a fibration with path-connected base (coming from the long exact sequence of a pair) also holds for a quasifibration. This long exact sequence is functorial with respect to fiberwise maps $X\to X'$ of quasifibrations over the same base $Y$.

\begin{dfn}
	For every topological space $X$ and every continuous function $f:X\to X$, we denote by $T_f$ its \emph{mapping torus}, that is
	$$
	T_{f}=X\times [0,1]/\left((x,1)\sim (f(x),0)\quad\text{ for all } x\in X\right).
	$$
\end{dfn}

Associated to each mapping torus, we have a projection morphism
$T_f\to[0,1]/(0\sim 1)\cong\SS^{1}$ given by $[(x,t)]\mapsto [t]$.

The following result is well known, but we include the proof for the reader's convenience.
\begin{lemma}\label{lem:qf}
	Let $K$ be the space resulting from gluing the mapping tori of $f_j:X\to X$ along $X\times\{0\}$ for all $j=1,\ldots, n$. If all the $f_j$'s are homotopy equivalences, then the projection map
	$$
	F:K\to \bigvee_{j=1}^n\SS^1_j
	$$
	obtained by gluing the projection maps $T_{f_j}\to\SS^1_j$ (where $\SS^1_j$ is a copy of $\SS^1$) is a quasifibration.
\end{lemma}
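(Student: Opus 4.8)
The plan is to verify the quasifibration property by means of the standard gluing criteria of Dold and Thom for quasifibrations (see also \cite{hatcher}). I will use two facts. First, over a contractible base a continuous map is a quasifibration precisely when the inclusion of each fiber into the total space is a weak homotopy equivalence, since then all the relative homotopy groups of the pair (total space, fiber) must vanish. Second, the union criterion: if $B=U\cup V$ with $U,V$ open and the restrictions of the map over $U$, over $V$, and over $U\cap V$ are all quasifibrations, then the map is a quasifibration over $B$.

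First I would fix notation for the base. Write $*$ for the wedge point of $\bigvee_{j=1}^n\SS^1_j$, which is the image under $F$ of the common gluing locus $X\times\{0\}$; thus $F^{-1}(*)=X$. For a small $\varepsilon>0$ let $W\subset\bigvee_j\SS^1_j$ be the open star-shaped neighborhood of $*$ consisting of the arcs $|t|<\varepsilon$ in each $\SS^1_j$, and for each $j$ let $V_j=\SS^1_j\setminus\{*\}$, an open arc. These $n+1$ open sets cover the wedge, one has $V_i\cap V_j=\emptyset$ for $i\neq j$, and each $W\cap V_j$ is a disjoint union of two open arcs.

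The routine part is the behavior away from the wedge point. Over each $V_j$ the map $F$ restricts to the trivial bundle $X\times V_j\to V_j$, and likewise over each of the two arcs of $W\cap V_j$, so $F$ is a quasifibration over $V_j$ and over $W\cap V_j$ (trivial bundles over contractible bases). The crucial step is to show that $F$ is a quasifibration over $W$. Here $F^{-1}(W)$ is obtained from the common fiber $X=F^{-1}(*)$ by attaching, for each $j$, a \emph{forward} collar $X\times[0,\varepsilon)$ glued along $X\times\{0\}$ and a \emph{backward} collar $X\times(1-\varepsilon,1]$ glued to $X$ along $X\times\{1\}$ through the identification $(x,1)\sim (f_j(x),0)$. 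This space deformation retracts onto $X$ by collapsing every collar. Since $W$ is contractible, it remains only to check that the inclusion of each fiber into $F^{-1}(W)$ is a weak homotopy equivalence: for a point on a forward collar this inclusion is homotopic to the identity of $X$, while for a point on the backward collar of the $j$-th circle it is homotopic to $f_j$. This is exactly where the hypothesis enters, and it is the main obstacle: the backward-collar fibers include into $F^{-1}(W)$ through the monodromy maps $f_j$, so the weak-equivalence condition holds for every fiber if and only if each $f_j$ is a homotopy equivalence. The only delicate verification I foresee is the bookkeeping of these fiber inclusions under the deformation retraction, making sure the relevant homotopies can be taken compatibly with the collapsing of the collars.

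Finally I would assemble the pieces by the union criterion applied inductively along the cover. Setting $B_0=W$ and $B_k=W\cup V_1\cup\cdots\cup V_k$, one has $B_k=B_{k-1}\cup V_k$ with $B_{k-1}\cap V_k=W\cap V_k$ (because $V_i\cap V_k=\emptyset$ for $i<k$), over which $F$ is already known to be a quasifibration. Hence if $F$ is a quasifibration over $B_{k-1}$ and over $V_k$, the union criterion shows it is one over $B_k$; starting from $B_0=W$ and iterating $n$ times gives that $F$ is a quasifibration over $B_n=\bigvee_{j=1}^n\SS^1_j$, as desired.
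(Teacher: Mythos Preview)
Your proof is correct and follows the same overall strategy as the paper---both use the Dold--Thom gluing criteria (Hatcher's Lemma~4K.3) to patch together local quasifibrations---but your open cover differs from the paper's in a way that slightly streamlines the argument. The paper covers $\bigvee_j\SS^1_j$ by the star neighborhood $U$ together with the sets $V_j=\SS^1_j\cup U$; since each $V_j$ contains the full circle $\SS^1_j$, the paper must separately verify that each mapping torus projection $T_{f_j}\to\SS^1_j$ is itself a quasifibration, which requires a second pass through Lemma~4K.3. You instead take $V_j=\SS^1_j\setminus\{*\}$, so that $F$ restricted over $V_j$ and over $W\cap V_j$ is a trivial bundle, eliminating that extra step. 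Over the star $W$ you argue directly from the definition (fiber inclusions are weak equivalences after retracting onto $X$), whereas the paper phrases the same verification through Lemma~4K.3(c); these are equivalent. Your inductive assembly along $B_k=W\cup V_1\cup\cdots\cup V_k$ is clean since $V_i\cap V_k=\emptyset$ for $i\neq k$, so each intersection $B_{k-1}\cap V_k$ is just $W\cap V_k$, over which the map is trivial.
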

\begin{proof}
	Let $U$ be a small open ball centered at the wedge point $P$ of $\bigvee_{j=1}^n\SS^1_j$. Let $V_j=\SS^1_j\cup U$. By \cite[Lemma 4K.3 (a)]{hatcher}, $F$ is a quasifibration if
	$$
	F:F^{-1}(V_j)\to V_j, \quad\text{and}\quad F:F^{-1}(U)\to U
	$$
	are quasifibrations. Now, \cite[Lemma 4K.3 (c)]{hatcher} implies the following:
	\begin{itemize}
		\item  $F:F^{-1}(U)\to U$ is a quasifibration if $F:X\to \{P\}$ is a quasifibration and $f_j:X\to X$ is a weak homotopy equivalence for all $j=1,\ldots,n$. Note that $F:X\to \{P\}$ is a quasifibration (since it is a fibration) and that $f_j$ is a homotopy equivalence by hypothesis. Hence, $F:F^{-1}(U)\to U$ is a quasifibration.
		\item $F:F^{-1}(V_j)\to V_j$ is a quasifibration if $F:F^{-1}(\SS^1_j)\to \SS^1_j$ (the mapping torus projection $T_{f_j}\to \SS^1_j$) is a quasifibration and $f_k:X\to X$ is a weak homotopy equivalence for all $k\neq j$.
	\end{itemize}
	Hence, we just need to show that the mapping torus projection $F:T_{f_j}\to \SS^1_j$ is a quasifibration for all $j=1,\ldots,n$. Let $P_j\in \SS^1_j\setminus\{P\}$. Note that $T_{f_j}\to \SS^1_j$ is a fibration when restricted to $\SS^1_j\setminus\{P\}$ and to $\SS^1_j\setminus\{P,P_j\}$. Also, since the constant map $F:X\to\{P\}$ is a fibration and $f_j$ is a homotopy equivalence, \cite[Lemma 4K.3 (c)]{hatcher} implies that $F:T_{f_j}\setminus F^{-1}(P_j)\to \SS^1_j\setminus\{P_j\}$ is a quasifibration. Now, \cite[Lemma 4K.3 (a)]{hatcher} implies that $T_{f_j}\to \SS^1_j$ is a quasifibration. This concludes our proof.
\end{proof}

\begin{cor}\label{cor:aspherical}
	Under the hypotheses of Lemma~\ref{lem:qf}, if $X$ is aspherical, then so is $K$.
\end{cor}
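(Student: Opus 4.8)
The plan is to read off the vanishing of the higher homotopy groups of $K$ directly from the long exact sequence of the quasifibration $F\colon K\to\bigvee_{j=1}^n\SS^1_j$ provided by Lemma~\ref{lem:qf}. Recall from the preliminaries that, since the base $\bigvee_{j=1}^n\SS^1_j$ is path-connected, this quasifibration enjoys the same long exact sequence of homotopy groups as an honest fibration, so the machinery needed is already in place.

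First I would fix as basepoint the wedge point $P$, whose fiber $F^{-1}(P)$ is exactly the common copy of $X$ along which the mapping tori $T_{f_j}$ are glued. The long exact sequence then reads
$$\cdots\to\pi_i(X)\to\pi_i(K)\to\pi_i\Big(\bigvee_{j=1}^n\SS^1_j\Big)\to\pi_{i-1}(X)\to\cdots.$$

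The key observation is that $\bigvee_{j=1}^n\SS^1_j$ is a $1$-dimensional CW complex (a graph), so its universal cover is a tree and hence contractible; consequently $\pi_i(\bigvee_{j=1}^n\SS^1_j)=0$ for all $i\geq 2$. Combining this with the hypothesis that $X$ is aspherical, i.e. $\pi_i(X)=0$ for $i\geq 2$, the relevant segment of the sequence for each $i\geq 2$ becomes
$$0=\pi_i(X)\to\pi_i(K)\to\pi_i\Big(\bigvee_{j=1}^n\SS^1_j\Big)=0,$$
which forces $\pi_i(K)=0$. Since $K$ is path-connected (being a union of the connected mapping tori $T_{f_j}$, all meeting along the copy of $X$), this shows that $K$ is aspherical.

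I do not expect any serious obstacle. The only points requiring care are the validity of the long exact sequence for a quasifibration, which is exactly what Lemma~\ref{lem:qf} together with the functoriality statement in the preliminaries secures, and the correct identification of the fiber over $P$ with $X$. Everything else is a formal consequence of the asphericity of the base graph and of the fiber $X$.
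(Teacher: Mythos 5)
Your proposal is correct and follows exactly the paper's argument: the long exact sequence of the quasifibration from Lemma~\ref{lem:qf}, combined with the asphericity of the wedge of circles and of the fiber $X$, forces $\pi_i(K)=0$ for $i\geq 2$. The paper states this in one line; you have merely spelled out the same steps in more detail, including the correct identification of $F^{-1}(P)$ with $X$.
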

\begin{proof}
	This follows from Lemma~\ref{lem:qf} by using the long exact sequence of the quasifibration in homotopy groups, and the fact that a wedge of $\SS^1$'s is aspherical.
\end{proof}

\section{A question of Libgober}\label{sec:libgober}
In~\cite{Libgober-homotopytype}, Libgober asked whether the 
fundamental group and the Euler characteristic of a curve in $\CC^2$ determines its homotopy type.

The following result shows that if $D$ is a curve in $\PP^2$ such that $\pi_1(\PP^2\setminus D)$ is free, 
then the homotopy type of $\PP^2\setminus D$ is completely determined by $\pi_1(\PP^2\setminus D)$ and its 
Euler characteristic. 

\begin{thm}
Let $D$ be a plane curve in $\PP^2$ whose fundamental group of the complement $\pi_1(\PP^2\setminus D)$ is a 
free group of rank $r$, then the homotopy type of $\PP^2\setminus D$ depends only on $r$ and $\chi(D)$ the 
Euler characteristic of $D$. Moreover,
\begin{equation}\label{eq:homtype}
\PP^2\setminus D\cong \left(\bigvee_r \SS^1\right) \bigvee \left(\bigvee_{s} \SS^2\right),
\end{equation}
where $s=\chi(\PP^2\setminus D)+r-1=2+r-\chi(D)$ and $\bigvee_{n} X$ denotes the wedge sum of $n$ copies of $X$.
\end{thm}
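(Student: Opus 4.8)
The plan is to separate the geometry from the algebra: the only geometric input I need is that $\PP^2\setminus D$ has the homotopy type of a finite, connected, $2$-dimensional CW complex, and everything else is a general statement about such complexes with free fundamental group. For the geometric input, note that since $D$ has some positive degree $d$, the $d$-th Veronese embedding realizes $\PP^2\setminus D$ as a closed subvariety of the affine space $\PP^N\setminus H\cong\CC^N$, where $H$ is the hyperplane cutting out $D$ on the Veronese image. Thus $\PP^2\setminus D$ is a smooth affine variety of complex dimension $2$, so by the Andreotti--Frankel theorem it has the homotopy type of a finite CW complex of real dimension at most $2$, and it is connected because $D$ is a proper closed subvariety of $\PP^2$. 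Write $X$ for this $2$-complex, with $\pi=\pi_1(X)\cong F_r$.

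The algebraic heart is the claim that any finite connected $2$-complex $X$ with $\pi_1(X)\cong F_r$ is homotopy equivalent to $(\bigvee_r\SS^1)\vee(\bigvee_s\SS^2)$. Give $X$ a CW structure with one $0$-cell, $a$ $1$-cells and $b$ $2$-cells. The cellular chain complex of the universal cover $\widetilde X$ is a complex of free $\ZZ\pi$-modules $\ZZ\pi^{b}\to\ZZ\pi^{a}\to\ZZ\pi$ with $H_0=\ZZ$, with $H_1=0$ since $\widetilde X$ is simply connected, and with $H_2=\pi_2(X)$ by Hurewicz; splicing with the augmentation yields a free resolution
\[
0\to\pi_2(X)\to\ZZ\pi^{b}\to\ZZ\pi^{a}\to\ZZ\pi\to\ZZ\to0.
\]
Here I would use the two special features of a free group. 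First, its augmentation ideal $I\subset\ZZ\pi$ is free of rank $r$ (basis $x_i-1$), so the surjection $\ZZ\pi^{a}\twoheadrightarrow I$ splits, exhibiting $Z_1:=\ker(\ZZ\pi^{a}\to\ZZ\pi)$ as a summand of a free module; then the surjection $\ZZ\pi^{b}\twoheadrightarrow Z_1$ splits as well, so $\pi_2(X)$ is projective. Combining the two splittings gives $\pi_2(X)\oplus\ZZ\pi^{a}\cong\ZZ\pi^{b+r}$, so, $\ZZ\pi$ having invariant basis number, $\pi_2(X)$ is stably free of rank $s=b-a+r\ge0$. Second---and this is the crux---every finitely generated projective $\ZZ[F_r]$-module is free, so in fact $\pi_2(X)\cong\ZZ\pi^{s}$.

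With $\pi_2(X)$ identified as a free $\ZZ\pi$-module I would finish by an explicit Whitehead argument. Choose a map $\bigvee_r\SS^1\to X$ realizing a free basis of $\pi_1(X)$ and maps $\SS^2\to X$ representing a $\ZZ\pi$-basis $\alpha_1,\dots,\alpha_s$ of $\pi_2(X)$; together they give $f\colon Y:=(\bigvee_r\SS^1)\vee(\bigvee_s\SS^2)\to X$, which by construction induces an isomorphism on $\pi_1$. Passing to the equivariant lift $\widetilde f\colon\widetilde Y\to\widetilde X$ of universal covers, both spaces are simply connected and $2$-dimensional; $\widetilde f$ is an isomorphism on $H_0$ and $H_1$ trivially, on $H_{\ge3}$ because both groups vanish, and on $H_2$ because $H_2(\widetilde Y)\cong\ZZ\pi^{s}$ maps, basis to basis, onto $H_2(\widetilde X)=\pi_2(X)$. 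Hence $\widetilde f$ is a homology isomorphism of simply connected CW complexes, so a homotopy equivalence, and therefore so is $f$. Finally the value of $s$ is pinned down by additivity of the Euler characteristic: $\chi(\PP^2\setminus D)=\chi(\PP^2)-\chi(D)=3-\chi(D)$, while $\chi(Y)=1-r+s$, giving $s=\chi(\PP^2\setminus D)+r-1=2+r-\chi(D)$.

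The main obstacle is the module-theoretic input invoked above, namely that finitely generated projective $\ZZ[F_r]$-modules are free, which is what upgrades $\pi_2(X)$ from stably free to genuinely free. This is precisely where the hypothesis that $\pi_1$ is \emph{free}---rather than merely a group of cohomological dimension one---enters in an essential way; everything else in the argument is formal.
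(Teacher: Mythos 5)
Your proof is correct. The geometric reduction is the same as the paper's: you invoke Andreotti--Frankel (and implicitly Milnor's Morse-theoretic finiteness) to replace $\PP^2\setminus D$ by a finite connected $2$-complex, exactly as the paper does. Where you diverge is that the paper then simply cites \cite[Proposition 3.3]{wall-FinitenessConditions} for the wedge decomposition, whereas you open that black box and prove the classification of finite $2$-complexes with free fundamental group from scratch: the chain complex of the universal cover, the splitting off of the free augmentation ideal $I\subset\ZZ[F_r]$ to show $\pi_2(X)$ is stably free of rank $s=b-a+r=\chi(X)+r-1$, Bass's theorem that finitely generated projective $\ZZ[F_r]$-modules are free to upgrade stably free to free, and a Whitehead argument on universal covers to realize the homotopy equivalence with $\bigl(\bigvee_r\SS^1\bigr)\vee\bigl(\bigvee_s\SS^2\bigr)$. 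All the steps check out (including the identification $\im(d_1)=I$ for a one-vertex complex and the passage from the equivariant homology isomorphism to a homotopy equivalence of the base spaces). What your version buys is transparency: it isolates exactly where freeness of $\pi_1$, rather than merely cohomological dimension one, enters. What it costs is that you have not actually removed the reliance on a nontrivial external result --- you have traded Wall's proposition for Bass's theorem on projective modules over free group rings, which you quote without proof; this is legitimate, but worth flagging as the one substantive citation your argument still rests on.
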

\begin{proof}
Since the space $\PP^2\setminus D$ is affine and smooth, $\PP^2\setminus D$ has the homotopy type of a 
2-dimensional CW complex (\cite{Andreotti-Frankel-Lefschetz} and \cite{Milnor1}).
In this case, the hypotheses of~\cite[Proposition 3.3.]{wall-FinitenessConditions} are satisfied and 
hence~\eqref{eq:homtype} follows. The value of $s$ can be obtained using $3-\chi(D)=\chi(\PP^2\setminus D)=1-r+s$.
\end{proof}

Note that the previous result includes complements of curves in $\PP^2$ that cannot be described as
complements of curves in $\CC^2$ (provided that their fundamental group is free),
as shown in the following example of \cite{ji-Eva-orbifold}, which we recall below.
 
 \begin{exam}[Example 4.12 in \cite{ji-Eva-orbifold}]
 \label{ex:pq}
 	Let $p,q\in \ZZ_{>0}$ be such that $p<q$ and $\gcd(p,q)=1$, and let $f_p,f_q\in \CC[x,y,z]$ be irreducible polynomials
 	of degrees $p$ and $q$. Suppose that $\pi_1(\PP^2\setminus V(f_p\cdot f_q))\cong \ZZ$ (which is the case if
 	$V(f_p)$ and $V(f_q)$ are transversal curves which only have nodal singularities, for example). Assume $F:=[f_p^q:f_q^p]$ has no multiple fibers outside $B_0:=\{[0:1], [1:0]\}$.
 	Let $B=B_0\cup B'$, where $B'\subset \PP^1\setminus (B_0\cup B_F)$ is a finite set of $r-1$ elements, $r\geq 1$. For $C:=\overline{F^{-1}(B)}$ one has
 	$$
 	\pi_1(\PP^2\setminus C)\cong \FF_r,
 	$$
 	where $\FF_r$ is the free group of rank~$r$.
 	If $p=1$, then one can see $C$ as a union $C=L\cup D$ where $L=\overline{F^{-1}([0:1])}$
 	is a line. Identifying $\CC^2\equiv\PP^2\setminus L$ one has
 	$\PP^2\setminus C=\CC^2\setminus D$, where $D$ and $L$ are not necessarily transversal.
 	On the other hand, if $p>1$, then $C$ does not contain a line, so $\PP^2\setminus C$ cannot be interpreted as the complement
 	of a curve in~$\CC^2$.
 \end{exam}

\begin{rem}
An analogous result can be proved for $\pi_1(\PP^2\setminus D)\cong \ZZ_d$ since the homotopy type is determined
by $d$ and $\chi(\PP^2\setminus D)$ as 
$$\PP^2\setminus D\cong (\SS^1 \cup_d e^2) \bigvee \left(\bigvee_{s} \SS^2\right),$$
where the gluing in $\SS^1 \cup_d e^2$ is such that the boundary of the $2$-cell $e^2$ is attached to $\SS^1$ by a map of order $d$, and $s=\chi(\PP^2\setminus D)-1=2-\chi(D)$ (see~\cite{dyer-sieradski,metzler}).
\end{rem}

The general question about (finitely generated) free products of cyclic groups groups is open, see~\cite{Hog-homotopytype} for this question. Here, we provide the following intermediate result.

\begin{thm}\label{thm:homotopyorbi}
Let $D$ be a plane curve in $\PP^2$ whose fundamental group of the complement $\pi_1(\PP^2\setminus D)$ is a
free product of cyclic groups. Then, the homotopy groups $\pi_i(\PP^2\setminus D)$ are determined for all
$i\geq 1$ by $\pi_1(\PP^2\setminus D)$ and $\chi(D)$.
\end{thm}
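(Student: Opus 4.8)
The plan is to reduce the computation of all higher homotopy groups to a single cardinal invariant and then to pin that invariant down using $G=\pi_1(\PP^2\setminus D)$ together with $\chi(D)$. Write $Y=\PP^2\setminus D$. Exactly as in the proof of the free case above, $Y$ is smooth and affine, so it has the homotopy type of a finite $2$-dimensional CW complex; replacing $Y$ by such a model, its universal cover $\widetilde Y$ is a simply connected CW complex of dimension at most $2$. Hence $H_2(\widetilde Y)$ is free abelian, $H_i(\widetilde Y)=0$ for $i\ge 3$, and by Hurewicz $\pi_2(\widetilde Y)\cong H_2(\widetilde Y)$. Realizing a basis by maps $\SS^2\to\widetilde Y$ produces a map $\bigvee_m\SS^2\to\widetilde Y$, where $m=\mathrm{rank}_{\ZZ}H_2(\widetilde Y)$ is a possibly infinite cardinal, inducing an isomorphism on integral homology between simply connected spaces; by Whitehead's theorem it is a homotopy equivalence. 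Since the covering $\widetilde Y\to Y$ induces isomorphisms $\pi_i(\widetilde Y)\cong\pi_i(Y)$ for $i\ge 2$, we get $\pi_i(Y)\cong\pi_i\!\left(\bigvee_m\SS^2\right)$ for all $i\ge 2$, and these groups depend only on $m$. It therefore suffices to show that $m$ is determined by $G$ and $\chi(D)$.

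The first key step is to constrain the possible values of $m$. When $G$ is finite (being a free product of cyclic groups, $G$ is then trivial or a single finite cyclic group), $\widetilde Y$ is a finite complex and a covering-space Euler characteristic computation gives $1+m=\chi(\widetilde Y)=|G|\,\chi(Y)=|G|\,(3-\chi(D))$, so $m$ is determined. When $G$ is infinite I would prove the following lemma: any nonzero $\ZZ[G]$-submodule $M$ of a finitely generated free $\ZZ[G]$-module has infinite $\ZZ$-rank. Applied to $\pi_2(Y)=\ker(\partial_2\colon C_2\to C_1)\subseteq C_2$, where $C_\bullet$ is the cellular chain complex of $\widetilde Y$, this forces $m\in\{0,\aleph_0\}$. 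The lemma is where torsion in $G$ makes the naive argument fail, since $\ZZ[G]$ then has zero divisors and a nonzero element may generate a module with nontrivial annihilator; I would instead use that the left regular representation $\ell^2(G)$ of an infinite group has no nonzero finite-dimensional invariant subspace (a short argument on the $\ell^2$-decay of matrix coefficients), observing that a finite-rank $M$ would, after tensoring with $\QQ$, produce such a subspace inside $\ell^2(G)^{\oplus c}$. I expect this lemma to be the main obstacle.

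Finally I would identify $m$ in the infinite case. Since $m=0$ holds exactly when $\widetilde Y$ is contractible, i.e.\ when $Y$ is aspherical, the dichotomy is governed by whether $G$ admits a $2$-dimensional $K(G,1)$. If $G$ has a finite cyclic free factor $\ZZ_d$, then $G$ contains $\ZZ_d$ as a subgroup, so $\mathrm{cd}(G)=\infty$ and no $2$-complex with fundamental group $G$ can be aspherical; hence $m=\aleph_0$, independently of $\chi(D)$. If instead $G$ is free of rank $r$, the free case established above gives $Y\simeq\left(\bigvee_r\SS^1\right)\vee\left(\bigvee_s\SS^2\right)$ with $s=\chi(Y)+r-1$, so $m=0$ precisely when $s=0$ (equivalently $\chi(Y)=1-r$) and $m=\aleph_0$ otherwise. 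In every case $m$ is determined by $G$ and $\chi(Y)=3-\chi(D)$, so all $\pi_i(Y)$ with $i\ge 2$ are determined by $G$ and $\chi(D)$, which completes the argument. I would stress that this does \emph{not} determine the homotopy type of $Y$: curves with the same $G$ but different $\chi(D)$ yield non-isomorphic modules $\pi_2(Y)$ over $\ZZ[G]$, hence different homotopy types, while sharing all homotopy groups, which is exactly the phenomenon relevant to Libgober's question.
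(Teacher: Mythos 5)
Your argument is correct, but it takes a genuinely different route from the paper's. The paper's proof leans on a structural fact specific to plane curve complements (quoted from the authors' earlier work): if $\pi_1(\PP^2\setminus D)$ is a free product of cyclic groups, it must be of the form $\FF_r*\ZZ_p*\ZZ_q$ with $\gcd(p,q)=1$. Composing the abelianization with the projection onto its torsion $\ZZ_{pq}$ yields a degree-$pq$ unramified cover whose fundamental group is free of rank $pqr+(p-1)(q-1)$ (Reidemeister--Schreier) and whose Euler characteristic is $pq(3-\chi(D))$; the free case (Wall) then determines the homotopy type of the cover, hence all $\pi_i$ for $i\ge 2$. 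You instead pass to the universal cover, reduce everything to the single cardinal $m=\operatorname{rank}_{\ZZ}H_2(\widetilde Y)$, and determine $m$ by an Euler characteristic count when $G$ is finite and by the dichotomy $m\in\{0,\aleph_0\}$ when $G$ is infinite --- the latter resting on your lemma that a nonzero $\ZZ[G]$-submodule of a finitely generated free $\ZZ[G]$-module has infinite $\ZZ$-rank for $G$ infinite (which is true, and your $\ell^2$ vanishing-of-matrix-coefficients argument is a standard and valid way to prove it), together with $\operatorname{cd}(G)=\infty$ for groups with torsion to rule out $m=0$. What your approach buys is generality: it never uses the coprimality constraint, nor any classification of which free products of cyclic groups occur, and it would apply verbatim to any smooth affine surface whose fundamental group is a free product of cyclic groups (or, with the asphericity question settled separately, to any finitely presented group). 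What it costs is the extra lemma and a longer case analysis, where the paper's proof is a few lines once the structural result is quoted. One caveat: your closing aside, that curves with the same $G$ but different $\chi(D)$ must have non-isomorphic $\pi_2$ as $\ZZ[G]$-modules, is not justified as stated (the fact that such complements are not homotopy equivalent already follows from the homotopy invariance of $\chi$ for finite complexes); since it is not used in the proof, this does not affect the correctness of the argument.
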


\begin{proof}
	By \cite[Remark 3.5]{ji-Eva-orbifold}, $\pi_1(\PP^2\setminus D)\cong \FF_r*\ZZ_p*\ZZ_q$, where $r\geq 0$, $p,q\geq 1$ 
	and $\gcd(p,q)=1$ ($\FF_0$ is the trivial group by convention). Now, let $\pi:\pi_1(\PP^2\setminus D)\to \ZZ_{pq}$ 
	be the composition of the abelianization and the projection onto the torsion, which is a surjective group homomorphism.
	Using Reidemeister-Schreier techniques, one can show that the kernel of $\pi$ is a free group of rank $pqr+(p-1)(q-1)$.
	Hence, there exists an unramified cover of degree $pq$ of $\PP^2\setminus D$ with free fundamental group, which, like 
	$\PP^2\setminus D$, has the homotopy type of a finite CW complex of dimension $2$. The Euler characteristic of this cover
	is $pq\chi(\PP^2\setminus D)=pq(3-\chi(D))$. Thus, the higher homotopy groups of $\PP^2\setminus D$ are equal to
	those of the described cover, which by \cite[Proposition 3.3]{wall-FinitenessConditions} are determined by the rank of its fundamental group and its Euler characteristic.
\end{proof}

\begin{rem}
	By a theorem of MacLane and Whitehead \cite{MaWh}, the homotopy type of a finite $2$-dimensional CW complex is 
	completely determined by its algebraic $3$-type, which consists on its fundamental group $G$, its second homotopy 
	group $G_2$ (with the action of $G$), and its $k$-invariant, which is an element of $H^3(G,G_2)$ 
	(see also \cite[Chapter 2, Section 4]{bookCW}).
\end{rem}

% \begin{rem}
% 	Note that Theorem~\ref{thm:homotopyorbi} could have been stated in greater generality (for finite $2$-dimensional
% 	CW complexes), as the algebraic structure of $\PP^2\setminus D$ was not used in the proof. The composition of the
% 	abelianization of a finitely generated free product of cyclic groups $\FF_r*\ZZ_{m_1}*\ldots*\ZZ_{m_s}$ with its projection onto its
% 	maximal cyclic quotient $\ZZ_m$ (where $m=\lcm(m_1,\ldots,m_s)$) also yields a free group as its kernel. One can
% 	check that its rank is $1-m\left(1-r-\sum_{i=1}^s \left(1-\frac{1}{m_i}\right)\right)$.
%
% 	%$1-m\chi^{\orb}(\PP^1_{r+1,\bar m})$.  2  ? (r + 1) ?\sum_{i=1}^s \left(1-\frac{1}{m_i}\right)$
%
% \end{rem}

\section{The homotopy type of fibered curve complements}\label{sec:homotopy}

Let $F:U\to \PP^1$ be a surjective algebraic morphism with connected generic fibers, where $U$ is the complement in $\PP^2$
of a finite number of points. In other words $F:U\to\PP^1$ is the restriction to its maximal domain of definition of a
primitive pencil $F:\PP^2\dashrightarrow \PP^1$. A pencil is called \emph{primitive} if its generic member is an irreducible
curve. In particular, primitive implies fixed component-free. Let $C=\overline{F^{-1}(B)}$ for some non-empty finite
set $B\subset\PP^1$. If $B_F\subset B$, we give a description as a $2$-dimensional CW complex of $\PP^2\setminus C$ with
the help of the locally trivial fibration $F:\PP^2\setminus C\to \PP^1\setminus B$ as follows. We begin by setting some notation:
\begin{enumerate}
	\item We denote the $r+1$ distinct points in $B$ by $B=\{P_0,P_1,\ldots, P_r\}$, with $r\geq 0$.
	\item $\PP^1\setminus B$ deformation retracts onto a wedge of circles, each of them corresponding to a closed loop $\gamma_k$ around $P_k$ (a meridian) for all $k=1,\ldots, r$. We denote by $E_B:= \bigvee_{k=1}^r \mathbb{S}^1$ this (strong) deformation retract of $\PP^1\setminus B$, and by $P\in \PP^1\setminus B_F$ the wedge point. We refer to $\gamma_k$ as an element of $\pi_1(E_B,P)$ or $\pi_1(\PP^1\setminus B,P)$. We will assume that $\gamma_k$ is positively oriented, where the positive orientation is induced by the complex structure on a big disk containing~$E_B$.
	\item $F^{-1}(P)$ is homeomorphic to a smooth genus $g$ curve with $s$ points removed. Hence, it deformation retracts onto a wedge of $2g+s-1$ circles. We denote by $A_P:= \bigvee_{i=1}^{2g+s-1} \mathbb{S}^1$ this (strong) deformation retract of $F^{-1}(P)$, and by $Q\in F^{-1}(P)$ the wedge point. We let $x_i$ be a loop around the $i$-th $\mathbb{S}^1$ component of $A_P$, and refer to it as an element of $\pi_1(A_P,Q)$ or $\pi_1(F^{-1}(P),Q)$.
	\item For all $k=1,\ldots, r$, let  $$M_k:F^{-1}(P)\to F^{-1}(P)$$ be a monodromy homeomorphism corresponding to the closed loop $\gamma_k$ (well defined up to isotopy).
	\item Let $R:F^{-1}(P)\to A_P$ be a retraction that makes $A_P$ into a strong deformation retract of $F^{-1}(P)$, and let $\iota:A_P\to F^{-1}(P)$ be the inclusion. Let $g_k:A_P\to A_P$ be a cellular map homotopic to $R\circ M_k\circ\iota$.
\end{enumerate}

The following result can also be obtained from Cohen-Suciu~\cite[\S~1.3]{CohenSuciu-iterated}, but we include a 
detailed proof in this note for the sake of completeness.

\begin{thm}\label{thm:CWfibration}
Let $F:\PP^2\dashrightarrow \PP^1$ be a primitive pencil. Let $B=\{P_0,\ldots,P_{r}\}\subset \PP^1$
be $r+1$ distinct points for $r\geq 0$ such that $B_F\subset B$. Let $\gamma_k, g_k$ and $x_i$ be as in the preceding discussion for all $k=1,\ldots,r$ and all $i=1,\ldots,2g+s-1$. %$\PP^1\setminus B$ deformation retracts onto $E:=\vee_{k=1}^{r} \mathbb S^1$, the $k$-th of these circles being a meridian $\gamma_k$ about $P_k$ for $k=1,\ldots, r$. Let $M_k:\pi_1(F^{-1}(P))\to \pi_1(F^{-1}(P))$ be
%the monodromy map corresponding to $\gamma_k$ for all $k=1,\ldots, r$. %Note that, if $C_{P_k}$ is a 
%%typical fiber, $M_k$ is the identity.
%
%Let $P\notin B$, and let $(g,s)$ be such that the typical fiber $F^{-1}(P)$ is homeomorphic to a genus $g$ surface with $s\geq 0$ points removed. $F^{-1}(P)$ deformation retracts onto $A:=\vee_{i=1}^{2g+s-1} \mathbb S^1$. Let $x_i$ be a loop around the $i$-th 
%$\mathbb S^1$ component of $A$. 

Then, $\PP^2\setminus \overline{F^{-1}(B)}$ is homotopy equivalent to the aspherical 2-dimensional
CW complex with one $0$-cell given by the presentation
\begin{equation}\label{eq:pres}
\langle \gamma_k, x_i \mid  R_{k,i}(x)
\text{ for }k=1,\ldots, r,i=1,\ldots, 2g+s-1\rangle,
\end{equation}
where $R_{k,i}(x)=\gamma_k^{-1}x_i\gamma_k((g_k)_*(x_i))^{-1}$, and $(g_k)_*(x_i)$ is a word written in the  generators 
$$\{x_i\mid i=1,\ldots,2g+s-1\}$$
of $\pi_1(A_P,Q)$ for all $i=1,\ldots, 2g+s-1$.
\end{thm}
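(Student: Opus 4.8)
The plan is to build the homotopy equivalence directly from the fibration structure $F:\PP^2\setminus C\to \PP^1\setminus B$, where $C=\overline{F^{-1}(B)}$. First I would replace the base by its deformation retract $E_B=\bigvee_{k=1}^r\SS^1$ and the total space accordingly: pulling back the fibration along the inclusion $E_B\hookrightarrow \PP^1\setminus B$ yields a fibration over the wedge of circles whose total space is homotopy equivalent to $\PP^2\setminus C$. Over each circle $\SS^1_k$ the restricted fibration is, up to fiber homotopy equivalence, the mapping torus $T_{M_k}$ of the monodromy homeomorphism $M_k:F^{-1}(P)\to F^{-1}(P)$, and these tori are glued along the common fiber $F^{-1}(P)$ over the wedge point $P$. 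Thus $\PP^2\setminus C$ is homotopy equivalent to the space $K$ obtained by gluing the mapping tori $T_{M_k}$ along $F^{-1}(P)\times\{0\}$, exactly the configuration of Lemma~\ref{lem:qf}.

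Next I would pass from the fiber $F^{-1}(P)$ to its finite model. Since $F^{-1}(P)$ is a genus $g$ curve with $s$ punctures, it strong deformation retracts onto $A_P=\bigvee_{i=1}^{2g+s-1}\SS^1$ via $R$ and $\iota$. Replacing each monodromy $M_k$ by the cellular self-map $g_k\simeq R\circ M_k\circ\iota$ of $A_P$ changes the total space only up to homotopy equivalence (a fiberwise homotopy equivalence between quasifibrations over $E_B$ induces a homotopy equivalence on total spaces, by the functoriality of the long exact sequence together with Whitehead's theorem). Here the $g_k$ are honest homotopy equivalences of $A_P$ because the $M_k$ are homeomorphisms, so Lemma~\ref{lem:qf} applies and shows the resulting glued space $K'$ is aspherical by Corollary~\ref{cor:aspherical}, $A_P$ being a wedge of circles.

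It then remains to identify the fundamental group and the CW structure of $K'$. Using van Kampen on the decomposition of $K'$ into the mapping tori $T_{g_k}$ glued along $A_P$: each $T_{g_k}$ contributes a generator $\gamma_k$ (the circle direction), the generators $x_i$ come from the common base $A_P$, and the mapping-torus relation is precisely that conjugation by $\gamma_k$ sends $x_i$ to $(g_k)_*(x_i)$, i.e. $R_{k,i}(x)=\gamma_k^{-1}x_i\gamma_k\,((g_k)_*(x_i))^{-1}$. This yields the presentation~\eqref{eq:pres}. Since $K'$ is aspherical and two-dimensional, its homotopy type is that of the presentation $2$-complex; combined with the homotopy equivalence $\PP^2\setminus C\simeq K'$, this gives the theorem.

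The main obstacle I anticipate is the first paragraph: justifying that the restriction of $F$ over a small loop $\gamma_k$ is fiber-homotopy equivalent to the mapping torus of $M_k$, with the $M_k$ realized simultaneously as self-maps of a single fixed fiber $F^{-1}(P)$ over the wedge point. This requires choosing the monodromy representatives compatibly along paths from $P$ to each $P_k$ and controlling the basepoint identifications, so that the several tori genuinely share the fiber $A_P$; the subsequent van Kampen and asphericity steps are then formal given Lemma~\ref{lem:qf} and Corollary~\ref{cor:aspherical}.
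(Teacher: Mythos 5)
Your proposal is correct in outline, but it proves the key identification $\PP^2\setminus\overline{F^{-1}(B)}\simeq Y$ by a genuinely different mechanism than the paper. You both begin the same way: restrict to $Y'=F^{-1}(E_B)$, observe that the presentation $2$-complex $Y$ of~\eqref{eq:pres} is exactly the union of the mapping tori $T_{g_k}$ glued along $A_P$, and invoke Lemma~\ref{lem:qf} and Corollary~\ref{cor:aspherical} to get asphericity of $Y$. Where you diverge is in comparing $Y'$ with $Y$: you build an explicit zig-zag of fiberwise maps ($Y'$ fiber-homotopy equivalent to the glued tori of the $M_k$, then mapped to the glued tori of the $g_k$ via $R$) and conclude with the five lemma plus Whitehead's theorem. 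The paper never constructs any map between $Y'$ and $Y$: it notes that $Y'$ is also aspherical (long exact sequence of the fibration $F:Y'\to E_B$, with aspherical fiber and base) and that both fundamental groups are the semidirect product $\pi_1(F^{-1}(P))\rtimes\pi_1(E_B)$ with the same monodromy action, so the two spaces are homotopy equivalent simply because they are CW Eilenberg--MacLane spaces with isomorphic fundamental groups. This is exactly why the ``main obstacle'' you flag at the end --- realizing $F^{-1}(E_B)$ as mapping tori of compatibly chosen monodromies $M_k$ of a single fiber over the wedge point, with controlled basepoints --- does not arise in the paper's argument: only the induced action on $\pi_1(F^{-1}(P),Q)$ is needed, not a geometric clutching description of the total space. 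Your route is viable (the clutching description of a fibration over a wedge of circles is standard, and the five-lemma/Whitehead step is fine once both total spaces are known to be quasifibrations over $E_B$ with CW homotopy type), and it has the advantage of producing an explicit homotopy equivalence; the paper's route buys brevity by outsourcing everything to the uniqueness of $K(\pi,1)$'s. If you pursue your version, the one step you must actually write out carefully is the compatibility of the fiber identifications over the wedge point, which is precisely the point you identified.
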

\begin{proof}
Let $Y$ be the $2$-dimensional CW complex with one $0$-cell corresponding to the presentation~\eqref{eq:pres}. Note that $Y$
can be obtained by gluing the mapping tori $T_{g_k}$ along $A_P\times\{0\}$ for all $k=1,\ldots, r$. Observe that,
since $F:\PP^2\setminus\overline{F^{-1}(B)}\to\PP^1\setminus B$ is a fibration. Consider $Y'=F^{-1}(E_B)$.
It suffices to show that $Y$ and $Y'$ are homotopy equivalent. Note that $Y$ are in the situation considered in
Lemma~\ref{lem:qf} and Corollary~\ref{cor:aspherical}, so it is aspherical and thus an Eilenberg-MacLane space.
Note that the long exact sequence associated with $F:Y\to E_B$ implies that $Y'$ is aspherical and its fundamental group
is given as a semidirect product $\pi_1(F^{-1}(P))\rtimes\pi_1(E_B)$ where the elements of $\pi_1(E_B)$ act
on $\pi_1(F^{-1}(P))$ by the geometric monodromy. Therefore $Y$ and $Y'$ have isomorphic fundamental groups.

\end{proof}

\begin{rem}
	For the complement of curves in affine space, Libgober gave a 2-dimensional CW complex description using braid monodromy arguments in \cite{Libgober-homotopytype}. His description holds for all affine curves, not just those that are the union of fibers of a certain map and contain all exceptional fibers. However, his arguments cannot be easily generalized to the projective case, so Theorem~\ref{thm:CWfibration} provides an extension to \cite{Libgober-homotopytype}.
\end{rem}

\begin{rem}
	Theorem~\ref{thm:CWfibration} is easily generalized (with the same proof) to quasi-projective surfaces of the form $U_B$ such that $B_F\subset B$ and $B\neq\emptyset$, where $F:U\to S$ is a surjective algebraic morphism from a smooth quasi-projective variety to a smooth projective curve $S$ with connected generic fibers. In this case, $S\setminus B$ also deformation retracts onto a wedge of circles, although not all of them will be meridians around points in $B$. However, the fact that those loops were meridians was not used in the proof at all.
\end{rem}

\begin{exam}
\label{example:cubics}
Theorem~\ref{thm:CWfibration} can be applied to a pencil of plane smooth cubics in $\PP^2$ intersecting the line at 
infinity at one point. Besides the line at infinity, which has multiplicity 3, such pencils have generically two nodal cubics 
as atypical fibers. Denote by $f:\CC^2\to \CC$ the affine map defining this pencil and denote by $B_f$ the set of atypical affine 
values for $f$. For instance, if $f(x,y)=x^3+x^2-y^2$, then $B_f=\{\frac{4}{27},0\}$. 
Choose $B=\{\lambda_1=\frac{4}{27},\lambda_2=0,\lambda_3,\ldots,\lambda_{r}\}\supset B_f$. 
According to Theorem~\ref{thm:CWfibration}, the homotopy type of $\CC^2\setminus f^{-1}(B)$ is given by the CW complex given 
by the presentation
\begin{equation}
\label{eq:presentation}
\left\langle 
\gamma_1,\gamma_2,...,\gamma_r,x_1,x_2: 
\array{ll}
{\gamma_1^{-1}x_1\gamma_1=x_1}, & {\gamma_1^{-1}x_2\gamma_1=x_1x_2},\\
{\gamma_2^{-1}x_1\gamma_2=x_2x_1}, & {\gamma_2^{-1}x_2\gamma_2=x_2},\\
{\gamma_3^{-1}x_1\gamma_3=x_1},&{\gamma_3^{-1}x_2\gamma_3=x_2},\\
...&\\
{\gamma_r^{-1}x_1\gamma_r=x_1},&{\gamma_r^{-1}x_2\gamma_r=x_2}.\\
\endarray
\right\rangle
\end{equation}
where $\gamma_k$ is the lift of a meridian around $\lambda_k$. 
The base point is shown as a black dot in Figure~\ref{fig:cubica}.
The loop $x_2$ is shown in red in Figure~\ref{fig:cubica}, whereas the loop $x_1$ travels the dotted part of $x_2$ in 
the direction of the arrow, then the blue oval in the counterclockwise direction and finally back along the dotted
arc in the direction opposite to the arrow. The action of the monodromy along $\gamma_1$ (resp. $\gamma_2$) produces a 
Dehn twist along the blue oval (resp. the red loop). This produces the relations
$\gamma_1^{-1}x_1\gamma_1=x_1$, $\gamma_1^{-1}x_2\gamma_1=x_1x_2$ and 
$\gamma_2^{-1}(x_1x_2)\gamma_2=x_2(x_1x_2)$ and $\gamma_2^{-1}x_2\gamma_2=x_2$, which imply 
$\gamma_2^{-1}x_1\gamma_2=x_2x_1$. The action of the monodromy along the remaining $\gamma_k$ is trivial.
This induces presentation~\eqref{eq:presentation} and by Theorem~\ref{thm:CWfibration} its associated 2-dimensional 
CW complex has the homotopy type of~$\CC^2\setminus f^{-1}(B)$.

Note that $\pi_1(\CC^2\setminus f^{-1}(B))$ is not free. However, quotienting $\pi_1(\CC^2\setminus f^{-1}(B))$ by the 
normal subgroup generated by either $\gamma_1$ or $\gamma_2$ yields 
$\pi_1\left(\CC^2\setminus f^{-1}(B\setminus\{\lambda_1\})\right)$ or 
$\pi_1\left(\CC^2\setminus f^{-1}(B\setminus\{\lambda_2\})\right)$, which is the free group $\FF_{r-1}$ in both cases.
\end{exam}

%% Creator: Matplotlib, PGF backend
%%
%% To include the figure in your LaTeX document, write
%%   \input{<filename>.pgf}
%%
%% Make sure the required packages are loaded in your preamble
%%   \usepackage{pgf}
%%
%% Also ensure that all the required font packages are loaded; for instance,
%% the lmodern package is sometimes necessary when using math font.
%%   \usepackage{lmodern}
%%
%% Figures using additional raster images can only be included by \input if
%% they are in the same directory as the main LaTeX file. For loading figures
%% from other directories you can use the `import` package
%%   \usepackage{import}
%%
%% and then include the figures with
%%   \import{<path to file>}{<filename>.pgf}
%%
%% Matplotlib used the following preamble
%%   \def\mathdefault#1{#1}
%%   \everymath=\expandafter{\the\everymath\displaystyle}
%%   
%%   \usepackage{fontspec}
%%   \setmainfont{DejaVuSerif.ttf}[Path=\detokenize{/home/jicogo/sage/sage-10.2/local/var/lib/sage/venv-python3.11.1/lib/python3.11/site-packages/matplotlib/mpl-data/fonts/ttf/}]
%%   \setsansfont{DejaVuSans.ttf}[Path=\detokenize{/home/jicogo/sage/sage-10.2/local/var/lib/sage/venv-python3.11.1/lib/python3.11/site-packages/matplotlib/mpl-data/fonts/ttf/}]
%%   \setmonofont{DejaVuSansMono.ttf}[Path=\detokenize{/home/jicogo/sage/sage-10.2/local/var/lib/sage/venv-python3.11.1/lib/python3.11/site-packages/matplotlib/mpl-data/fonts/ttf/}]
%%   \makeatletter\@ifpackageloaded{underscore}{}{\usepackage[strings]{underscore}}\makeatother
%%

\begin{centering}
\begin{figure}
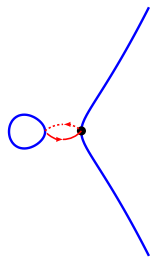

\scalebox{.3}{
\begingroup%
\makeatletter%
\begin{pgfpicture}%
\pgfpathrectangle{\pgfpointorigin}{\pgfqpoint{5.825000in}{4.700000in}}%
\pgfusepath{use as bounding box, clip}%
% \begin{pgfscope}%
% \pgfsetbuttcap%
% \pgfsetmiterjoin%
% \definecolor{currentfill}{rgb}{1.000000,1.000000,1.000000}%
% \pgfsetfillcolor{currentfill}%
% \pgfsetlinewidth{0.000000pt}%
% \definecolor{currentstroke}{rgb}{1.000000,1.000000,1.000000}%
% \pgfsetstrokecolor{currentstroke}%
% \pgfsetdash{}{0pt}%
% \pgfpathmoveto{\pgfqpoint{0.000000in}{0.000000in}}%
% \pgfpathlineto{\pgfqpoint{5.825000in}{0.000000in}}%
% \pgfpathlineto{\pgfqpoint{5.825000in}{4.700000in}}%
% \pgfpathlineto{\pgfqpoint{0.000000in}{4.700000in}}%
% \pgfpathlineto{\pgfqpoint{0.000000in}{0.000000in}}%
% \pgfpathclose%
% \pgfusepath{fill}%
% \end{pgfscope}%
% \begin{pgfscope}%
% \pgfsetbuttcap%
% \pgfsetmiterjoin%
% \definecolor{currentfill}{rgb}{1.000000,1.000000,1.000000}%
% \pgfsetfillcolor{currentfill}%
% \pgfsetlinewidth{0.000000pt}%
% \definecolor{currentstroke}{rgb}{0.000000,0.000000,0.000000}%
% \pgfsetstrokecolor{currentstroke}%
% \pgfsetstrokeopacity{0.000000}%
% \pgfsetdash{}{0pt}%
% \pgfpathmoveto{\pgfqpoint{0.100000in}{0.100000in}}%
% \pgfpathlineto{\pgfqpoint{5.725000in}{0.100000in}}%
% \pgfpathlineto{\pgfqpoint{5.725000in}{4.600000in}}%
% \pgfpathlineto{\pgfqpoint{0.100000in}{4.600000in}}%
% \pgfpathlineto{\pgfqpoint{0.100000in}{0.100000in}}%
% \pgfpathclose%
% \pgfusepath{fill}%
% \end{pgfscope}%
\begin{pgfscope}%
\pgfpathrectangle{\pgfqpoint{0.100000in}{0.100000in}}{\pgfqpoint{5.625000in}{4.500000in}}%
\pgfusepath{clip}%
\pgfsetbuttcap%
\pgfsetroundjoin%
\pgfsetlinewidth{3pt}%
% \pgfsetlinewidth{1.505625pt}%
\definecolor{currentstroke}{rgb}{0.000000,0.000000,1.000000}%
\pgfsetstrokecolor{currentstroke}%
\pgfsetdash{}{0pt}%
\pgfpathmoveto{\pgfqpoint{3.803828in}{4.513462in}}%
\pgfpathlineto{\pgfqpoint{3.801842in}{4.509436in}}%
\pgfpathlineto{\pgfqpoint{3.788974in}{4.484422in}}%
\pgfpathlineto{\pgfqpoint{3.774236in}{4.455382in}}%
\pgfpathlineto{\pgfqpoint{3.765543in}{4.438015in}}%
\pgfpathlineto{\pgfqpoint{3.759442in}{4.426342in}}%
\pgfpathlineto{\pgfqpoint{3.744475in}{4.397303in}}%
\pgfpathlineto{\pgfqpoint{3.729719in}{4.368263in}}%
\pgfpathlineto{\pgfqpoint{3.729243in}{4.367313in}}%
\pgfpathlineto{\pgfqpoint{3.714538in}{4.339223in}}%
\pgfpathlineto{\pgfqpoint{3.699555in}{4.310183in}}%
\pgfpathlineto{\pgfqpoint{3.692943in}{4.297176in}}%
\pgfpathlineto{\pgfqpoint{3.684416in}{4.281144in}}%
\pgfpathlineto{\pgfqpoint{3.669202in}{4.252104in}}%
\pgfpathlineto{\pgfqpoint{3.656644in}{4.227766in}}%
\pgfpathlineto{\pgfqpoint{3.654102in}{4.223064in}}%
\pgfpathlineto{\pgfqpoint{3.638649in}{4.194024in}}%
\pgfpathlineto{\pgfqpoint{3.623437in}{4.164985in}}%
\pgfpathlineto{\pgfqpoint{3.620344in}{4.158985in}}%
\pgfpathlineto{\pgfqpoint{3.607888in}{4.135945in}}%
\pgfpathlineto{\pgfqpoint{3.592443in}{4.106905in}}%
\pgfpathlineto{\pgfqpoint{3.584044in}{4.090852in}}%
\pgfpathlineto{\pgfqpoint{3.576911in}{4.077865in}}%
\pgfpathlineto{\pgfqpoint{3.561226in}{4.048826in}}%
\pgfpathlineto{\pgfqpoint{3.547745in}{4.023435in}}%
\pgfpathlineto{\pgfqpoint{3.545708in}{4.019786in}}%
\pgfpathlineto{\pgfqpoint{3.529778in}{3.990746in}}%
\pgfpathlineto{\pgfqpoint{3.514127in}{3.961706in}}%
\pgfpathlineto{\pgfqpoint{3.511445in}{3.956639in}}%
\pgfpathlineto{\pgfqpoint{3.498087in}{3.932666in}}%
\pgfpathlineto{\pgfqpoint{3.482200in}{3.903627in}}%
\pgfpathlineto{\pgfqpoint{3.475145in}{3.890488in}}%
\pgfpathlineto{\pgfqpoint{3.466144in}{3.874587in}}%
\pgfpathlineto{\pgfqpoint{3.450017in}{3.845547in}}%
\pgfpathlineto{\pgfqpoint{3.438846in}{3.825038in}}%
\pgfpathlineto{\pgfqpoint{3.433939in}{3.816507in}}%
\pgfpathlineto{\pgfqpoint{3.417566in}{3.787468in}}%
\pgfpathlineto{\pgfqpoint{3.402546in}{3.760283in}}%
\pgfpathlineto{\pgfqpoint{3.401462in}{3.758428in}}%
\pgfpathlineto{\pgfqpoint{3.384839in}{3.729388in}}%
\pgfpathlineto{\pgfqpoint{3.368563in}{3.700348in}}%
\pgfpathlineto{\pgfqpoint{3.366246in}{3.696124in}}%
\pgfpathlineto{\pgfqpoint{3.351826in}{3.671309in}}%
\pgfpathlineto{\pgfqpoint{3.335317in}{3.642269in}}%
\pgfpathlineto{\pgfqpoint{3.329946in}{3.632606in}}%
\pgfpathlineto{\pgfqpoint{3.318516in}{3.613229in}}%
\pgfpathlineto{\pgfqpoint{3.301774in}{3.584189in}}%
\pgfpathlineto{\pgfqpoint{3.293647in}{3.569756in}}%
\pgfpathlineto{\pgfqpoint{3.284900in}{3.555150in}}%
\pgfpathlineto{\pgfqpoint{3.267926in}{3.526110in}}%
\pgfpathlineto{\pgfqpoint{3.257347in}{3.507560in}}%
\pgfpathlineto{\pgfqpoint{3.250972in}{3.497070in}}%
\pgfpathlineto{\pgfqpoint{3.233765in}{3.468030in}}%
\pgfpathlineto{\pgfqpoint{3.221047in}{3.446001in}}%
\pgfpathlineto{\pgfqpoint{3.216724in}{3.438991in}}%
\pgfpathlineto{\pgfqpoint{3.199289in}{3.409951in}}%
\pgfpathlineto{\pgfqpoint{3.184748in}{3.385059in}}%
\pgfpathlineto{\pgfqpoint{3.182154in}{3.380911in}}%
\pgfpathlineto{\pgfqpoint{3.164495in}{3.351871in}}%
\pgfpathlineto{\pgfqpoint{3.148448in}{3.324707in}}%
\pgfpathlineto{\pgfqpoint{3.147259in}{3.322832in}}%
\pgfpathlineto{\pgfqpoint{3.129385in}{3.293792in}}%
\pgfpathlineto{\pgfqpoint{3.112148in}{3.264913in}}%
\pgfpathlineto{\pgfqpoint{3.112045in}{3.264752in}}%
\pgfpathlineto{\pgfqpoint{3.093967in}{3.235712in}}%
\pgfpathlineto{\pgfqpoint{3.076472in}{3.206673in}}%
\pgfpathlineto{\pgfqpoint{3.075849in}{3.205603in}}%
\pgfpathlineto{\pgfqpoint{3.058256in}{3.177633in}}%
\pgfpathlineto{\pgfqpoint{3.040620in}{3.148593in}}%
\pgfpathlineto{\pgfqpoint{3.039549in}{3.146764in}}%
\pgfpathlineto{\pgfqpoint{3.022279in}{3.119553in}}%
\pgfpathlineto{\pgfqpoint{3.004530in}{3.090514in}}%
\pgfpathlineto{\pgfqpoint{3.003249in}{3.088334in}}%
\pgfpathlineto{\pgfqpoint{2.986075in}{3.061474in}}%
\pgfpathlineto{\pgfqpoint{2.968251in}{3.032434in}}%
\pgfpathlineto{\pgfqpoint{2.966950in}{3.030222in}}%
\pgfpathlineto{\pgfqpoint{2.949708in}{3.003394in}}%
\pgfpathlineto{\pgfqpoint{2.931855in}{2.974355in}}%
\pgfpathlineto{\pgfqpoint{2.930650in}{2.972300in}}%
\pgfpathlineto{\pgfqpoint{2.913266in}{2.945315in}}%
\pgfpathlineto{\pgfqpoint{2.895450in}{2.916275in}}%
\pgfpathlineto{\pgfqpoint{2.894350in}{2.914389in}}%
\pgfpathlineto{\pgfqpoint{2.876884in}{2.887235in}}%
\pgfpathlineto{\pgfqpoint{2.859186in}{2.858196in}}%
\pgfpathlineto{\pgfqpoint{2.858050in}{2.856222in}}%
\pgfpathlineto{\pgfqpoint{2.840753in}{2.829156in}}%
\pgfpathlineto{\pgfqpoint{2.823286in}{2.800116in}}%
\pgfpathlineto{\pgfqpoint{2.821751in}{2.797394in}}%
\pgfpathlineto{\pgfqpoint{2.805158in}{2.771076in}}%
\pgfpathlineto{\pgfqpoint{2.788068in}{2.742037in}}%
\pgfpathlineto{\pgfqpoint{2.785451in}{2.737247in}}%
\pgfpathlineto{\pgfqpoint{2.770514in}{2.712997in}}%
\pgfpathlineto{\pgfqpoint{2.754003in}{2.683957in}}%
\pgfpathlineto{\pgfqpoint{2.749151in}{2.674648in}}%
\pgfpathlineto{\pgfqpoint{2.737449in}{2.654917in}}%
\pgfpathlineto{\pgfqpoint{2.721792in}{2.625878in}}%
\pgfpathlineto{\pgfqpoint{2.712852in}{2.607454in}}%
\pgfpathlineto{\pgfqpoint{2.706923in}{2.596838in}}%
\pgfpathlineto{\pgfqpoint{2.692506in}{2.567798in}}%
\pgfpathlineto{\pgfqpoint{2.679891in}{2.538758in}}%
\pgfpathlineto{\pgfqpoint{2.676552in}{2.529792in}}%
\pgfpathlineto{\pgfqpoint{2.667829in}{2.509719in}}%
\pgfpathlineto{\pgfqpoint{2.657313in}{2.480679in}}%
\pgfpathlineto{\pgfqpoint{2.648900in}{2.451639in}}%
\pgfpathlineto{\pgfqpoint{2.642590in}{2.422599in}}%
\pgfpathlineto{\pgfqpoint{2.640252in}{2.406462in}}%
\pgfpathlineto{\pgfqpoint{2.638033in}{2.393560in}}%
\pgfpathlineto{\pgfqpoint{2.635535in}{2.364520in}}%
\pgfpathlineto{\pgfqpoint{2.635535in}{2.335480in}}%
\pgfpathlineto{\pgfqpoint{2.638033in}{2.306440in}}%
\pgfpathlineto{\pgfqpoint{2.640252in}{2.293538in}}%
\pgfpathlineto{\pgfqpoint{2.642590in}{2.277401in}}%
\pgfpathlineto{\pgfqpoint{2.648900in}{2.248361in}}%
\pgfpathlineto{\pgfqpoint{2.657313in}{2.219321in}}%
\pgfpathlineto{\pgfqpoint{2.667829in}{2.190281in}}%
\pgfpathlineto{\pgfqpoint{2.676552in}{2.170208in}}%
\pgfpathlineto{\pgfqpoint{2.679891in}{2.161242in}}%
\pgfpathlineto{\pgfqpoint{2.692506in}{2.132202in}}%
\pgfpathlineto{\pgfqpoint{2.706923in}{2.103162in}}%
\pgfpathlineto{\pgfqpoint{2.712852in}{2.092546in}}%
\pgfpathlineto{\pgfqpoint{2.721792in}{2.074122in}}%
\pgfpathlineto{\pgfqpoint{2.737449in}{2.045083in}}%
\pgfpathlineto{\pgfqpoint{2.749151in}{2.025352in}}%
\pgfpathlineto{\pgfqpoint{2.754003in}{2.016043in}}%
\pgfpathlineto{\pgfqpoint{2.770514in}{1.987003in}}%
\pgfpathlineto{\pgfqpoint{2.785451in}{1.962753in}}%
\pgfpathlineto{\pgfqpoint{2.788068in}{1.957963in}}%
\pgfpathlineto{\pgfqpoint{2.805158in}{1.928924in}}%
\pgfpathlineto{\pgfqpoint{2.821751in}{1.902606in}}%
\pgfpathlineto{\pgfqpoint{2.823286in}{1.899884in}}%
\pgfpathlineto{\pgfqpoint{2.840753in}{1.870844in}}%
\pgfpathlineto{\pgfqpoint{2.858050in}{1.843778in}}%
\pgfpathlineto{\pgfqpoint{2.859186in}{1.841804in}}%
\pgfpathlineto{\pgfqpoint{2.876884in}{1.812765in}}%
\pgfpathlineto{\pgfqpoint{2.894350in}{1.785611in}}%
\pgfpathlineto{\pgfqpoint{2.895450in}{1.783725in}}%
\pgfpathlineto{\pgfqpoint{2.913266in}{1.754685in}}%
\pgfpathlineto{\pgfqpoint{2.930650in}{1.727700in}}%
\pgfpathlineto{\pgfqpoint{2.931855in}{1.725645in}}%
\pgfpathlineto{\pgfqpoint{2.949708in}{1.696606in}}%
\pgfpathlineto{\pgfqpoint{2.966950in}{1.669778in}}%
\pgfpathlineto{\pgfqpoint{2.968251in}{1.667566in}}%
\pgfpathlineto{\pgfqpoint{2.986075in}{1.638526in}}%
\pgfpathlineto{\pgfqpoint{3.003249in}{1.611666in}}%
\pgfpathlineto{\pgfqpoint{3.004530in}{1.609486in}}%
\pgfpathlineto{\pgfqpoint{3.022279in}{1.580447in}}%
\pgfpathlineto{\pgfqpoint{3.039549in}{1.553236in}}%
\pgfpathlineto{\pgfqpoint{3.040620in}{1.551407in}}%
\pgfpathlineto{\pgfqpoint{3.058256in}{1.522367in}}%
\pgfpathlineto{\pgfqpoint{3.075849in}{1.494397in}}%
\pgfpathlineto{\pgfqpoint{3.076472in}{1.493327in}}%
\pgfpathlineto{\pgfqpoint{3.093967in}{1.464288in}}%
\pgfpathlineto{\pgfqpoint{3.112045in}{1.435248in}}%
\pgfpathlineto{\pgfqpoint{3.112148in}{1.435087in}}%
\pgfpathlineto{\pgfqpoint{3.129385in}{1.406208in}}%
\pgfpathlineto{\pgfqpoint{3.147259in}{1.377168in}}%
\pgfpathlineto{\pgfqpoint{3.148448in}{1.375293in}}%
\pgfpathlineto{\pgfqpoint{3.164495in}{1.348129in}}%
\pgfpathlineto{\pgfqpoint{3.182154in}{1.319089in}}%
\pgfpathlineto{\pgfqpoint{3.184748in}{1.314941in}}%
\pgfpathlineto{\pgfqpoint{3.199289in}{1.290049in}}%
\pgfpathlineto{\pgfqpoint{3.216724in}{1.261009in}}%
\pgfpathlineto{\pgfqpoint{3.221047in}{1.253999in}}%
\pgfpathlineto{\pgfqpoint{3.233765in}{1.231970in}}%
\pgfpathlineto{\pgfqpoint{3.250972in}{1.202930in}}%
\pgfpathlineto{\pgfqpoint{3.257347in}{1.192440in}}%
\pgfpathlineto{\pgfqpoint{3.267926in}{1.173890in}}%
\pgfpathlineto{\pgfqpoint{3.284900in}{1.144850in}}%
\pgfpathlineto{\pgfqpoint{3.293647in}{1.130244in}}%
\pgfpathlineto{\pgfqpoint{3.301774in}{1.115811in}}%
\pgfpathlineto{\pgfqpoint{3.318516in}{1.086771in}}%
\pgfpathlineto{\pgfqpoint{3.329946in}{1.067394in}}%
\pgfpathlineto{\pgfqpoint{3.335317in}{1.057731in}}%
\pgfpathlineto{\pgfqpoint{3.351826in}{1.028691in}}%
\pgfpathlineto{\pgfqpoint{3.366246in}{1.003876in}}%
\pgfpathlineto{\pgfqpoint{3.368563in}{0.999652in}}%
\pgfpathlineto{\pgfqpoint{3.384839in}{0.970612in}}%
\pgfpathlineto{\pgfqpoint{3.401462in}{0.941572in}}%
\pgfpathlineto{\pgfqpoint{3.402546in}{0.939717in}}%
\pgfpathlineto{\pgfqpoint{3.417566in}{0.912532in}}%
\pgfpathlineto{\pgfqpoint{3.433939in}{0.883493in}}%
\pgfpathlineto{\pgfqpoint{3.438846in}{0.874962in}}%
\pgfpathlineto{\pgfqpoint{3.450017in}{0.854453in}}%
\pgfpathlineto{\pgfqpoint{3.466144in}{0.825413in}}%
\pgfpathlineto{\pgfqpoint{3.475145in}{0.809512in}}%
\pgfpathlineto{\pgfqpoint{3.482200in}{0.796373in}}%
\pgfpathlineto{\pgfqpoint{3.498087in}{0.767334in}}%
\pgfpathlineto{\pgfqpoint{3.511445in}{0.743361in}}%
\pgfpathlineto{\pgfqpoint{3.514127in}{0.738294in}}%
\pgfpathlineto{\pgfqpoint{3.529778in}{0.709254in}}%
\pgfpathlineto{\pgfqpoint{3.545708in}{0.680214in}}%
\pgfpathlineto{\pgfqpoint{3.547745in}{0.676565in}}%
\pgfpathlineto{\pgfqpoint{3.561226in}{0.651174in}}%
\pgfpathlineto{\pgfqpoint{3.576911in}{0.622135in}}%
\pgfpathlineto{\pgfqpoint{3.584044in}{0.609148in}}%
\pgfpathlineto{\pgfqpoint{3.592443in}{0.593095in}}%
\pgfpathlineto{\pgfqpoint{3.607888in}{0.564055in}}%
\pgfpathlineto{\pgfqpoint{3.620344in}{0.541015in}}%
\pgfpathlineto{\pgfqpoint{3.623437in}{0.535015in}}%
\pgfpathlineto{\pgfqpoint{3.638649in}{0.505976in}}%
\pgfpathlineto{\pgfqpoint{3.654102in}{0.476936in}}%
\pgfpathlineto{\pgfqpoint{3.656644in}{0.472234in}}%
\pgfpathlineto{\pgfqpoint{3.669202in}{0.447896in}}%
\pgfpathlineto{\pgfqpoint{3.684416in}{0.418856in}}%
\pgfpathlineto{\pgfqpoint{3.692943in}{0.402824in}}%
\pgfpathlineto{\pgfqpoint{3.699555in}{0.389817in}}%
\pgfpathlineto{\pgfqpoint{3.714538in}{0.360777in}}%
\pgfpathlineto{\pgfqpoint{3.729243in}{0.332687in}}%
\pgfpathlineto{\pgfqpoint{3.729719in}{0.331737in}}%
\pgfpathlineto{\pgfqpoint{3.744475in}{0.302697in}}%
\pgfpathlineto{\pgfqpoint{3.759442in}{0.273658in}}%
\pgfpathlineto{\pgfqpoint{3.765543in}{0.261985in}}%
\pgfpathlineto{\pgfqpoint{3.774236in}{0.244618in}}%
\pgfpathlineto{\pgfqpoint{3.788974in}{0.215578in}}%
\pgfpathlineto{\pgfqpoint{3.801842in}{0.190564in}}%
\pgfpathlineto{\pgfqpoint{3.803828in}{0.186538in}}%
\pgfpathmoveto{\pgfqpoint{1.587561in}{2.063138in}}%
\pgfpathlineto{\pgfqpoint{1.623861in}{2.057176in}}%
\pgfpathlineto{\pgfqpoint{1.660161in}{2.056085in}}%
\pgfpathlineto{\pgfqpoint{1.696460in}{2.059411in}}%
\pgfpathlineto{\pgfqpoint{1.732760in}{2.066695in}}%
\pgfpathlineto{\pgfqpoint{1.757755in}{2.074122in}}%
\pgfpathlineto{\pgfqpoint{1.769060in}{2.077855in}}%
\pgfpathlineto{\pgfqpoint{1.805359in}{2.093223in}}%
\pgfpathlineto{\pgfqpoint{1.825134in}{2.103162in}}%
\pgfpathlineto{\pgfqpoint{1.841659in}{2.112506in}}%
\pgfpathlineto{\pgfqpoint{1.872490in}{2.132202in}}%
\pgfpathlineto{\pgfqpoint{1.877959in}{2.136195in}}%
\pgfpathlineto{\pgfqpoint{1.909424in}{2.161242in}}%
\pgfpathlineto{\pgfqpoint{1.914259in}{2.165732in}}%
\pgfpathlineto{\pgfqpoint{1.939192in}{2.190281in}}%
\pgfpathlineto{\pgfqpoint{1.950558in}{2.203711in}}%
\pgfpathlineto{\pgfqpoint{1.963317in}{2.219321in}}%
\pgfpathlineto{\pgfqpoint{1.982305in}{2.248361in}}%
\pgfpathlineto{\pgfqpoint{1.986858in}{2.257645in}}%
\pgfpathlineto{\pgfqpoint{1.996415in}{2.277401in}}%
\pgfpathlineto{\pgfqpoint{2.005780in}{2.306440in}}%
\pgfpathlineto{\pgfqpoint{2.010463in}{2.335480in}}%
\pgfpathlineto{\pgfqpoint{2.010463in}{2.364520in}}%
\pgfpathlineto{\pgfqpoint{2.005780in}{2.393560in}}%
\pgfpathlineto{\pgfqpoint{1.996415in}{2.422599in}}%
\pgfpathlineto{\pgfqpoint{1.986858in}{2.442355in}}%
\pgfpathlineto{\pgfqpoint{1.982305in}{2.451639in}}%
\pgfpathlineto{\pgfqpoint{1.963317in}{2.480679in}}%
\pgfpathlineto{\pgfqpoint{1.950558in}{2.496289in}}%
\pgfpathlineto{\pgfqpoint{1.939192in}{2.509719in}}%
\pgfpathlineto{\pgfqpoint{1.914259in}{2.534268in}}%
\pgfpathlineto{\pgfqpoint{1.909424in}{2.538758in}}%
\pgfpathlineto{\pgfqpoint{1.877959in}{2.563805in}}%
\pgfpathlineto{\pgfqpoint{1.872490in}{2.567798in}}%
\pgfpathlineto{\pgfqpoint{1.841659in}{2.587494in}}%
\pgfpathlineto{\pgfqpoint{1.825134in}{2.596838in}}%
\pgfpathlineto{\pgfqpoint{1.805359in}{2.606777in}}%
\pgfpathlineto{\pgfqpoint{1.769060in}{2.622145in}}%
\pgfpathlineto{\pgfqpoint{1.757755in}{2.625878in}}%
\pgfpathlineto{\pgfqpoint{1.732760in}{2.633305in}}%
\pgfpathlineto{\pgfqpoint{1.696460in}{2.640589in}}%
\pgfpathlineto{\pgfqpoint{1.660161in}{2.643915in}}%
\pgfpathlineto{\pgfqpoint{1.623861in}{2.642824in}}%
\pgfpathlineto{\pgfqpoint{1.587561in}{2.636862in}}%
\pgfpathlineto{\pgfqpoint{1.552251in}{2.625878in}}%
\pgfpathlineto{\pgfqpoint{1.551262in}{2.625536in}}%
\pgfpathlineto{\pgfqpoint{1.514962in}{2.606560in}}%
\pgfpathlineto{\pgfqpoint{1.501342in}{2.596838in}}%
\pgfpathlineto{\pgfqpoint{1.478662in}{2.578625in}}%
\pgfpathlineto{\pgfqpoint{1.468189in}{2.567798in}}%
\pgfpathlineto{\pgfqpoint{1.443610in}{2.538758in}}%
\pgfpathlineto{\pgfqpoint{1.442363in}{2.537040in}}%
\pgfpathlineto{\pgfqpoint{1.426357in}{2.509719in}}%
\pgfpathlineto{\pgfqpoint{1.412179in}{2.480679in}}%
\pgfpathlineto{\pgfqpoint{1.406063in}{2.465018in}}%
\pgfpathlineto{\pgfqpoint{1.401726in}{2.451639in}}%
\pgfpathlineto{\pgfqpoint{1.394665in}{2.422599in}}%
\pgfpathlineto{\pgfqpoint{1.389958in}{2.393560in}}%
\pgfpathlineto{\pgfqpoint{1.387605in}{2.364520in}}%
\pgfpathlineto{\pgfqpoint{1.387605in}{2.335480in}}%
\pgfpathlineto{\pgfqpoint{1.389958in}{2.306440in}}%
\pgfpathlineto{\pgfqpoint{1.394665in}{2.277401in}}%
\pgfpathlineto{\pgfqpoint{1.401726in}{2.248361in}}%
\pgfpathlineto{\pgfqpoint{1.406063in}{2.234982in}}%
\pgfpathlineto{\pgfqpoint{1.412179in}{2.219321in}}%
\pgfpathlineto{\pgfqpoint{1.426357in}{2.190281in}}%
\pgfpathlineto{\pgfqpoint{1.442363in}{2.162960in}}%
\pgfpathlineto{\pgfqpoint{1.443610in}{2.161242in}}%
\pgfpathlineto{\pgfqpoint{1.468189in}{2.132202in}}%
\pgfpathlineto{\pgfqpoint{1.478662in}{2.121375in}}%
\pgfpathlineto{\pgfqpoint{1.501342in}{2.103162in}}%
\pgfpathlineto{\pgfqpoint{1.514962in}{2.093440in}}%
\pgfpathlineto{\pgfqpoint{1.551262in}{2.074464in}}%
\pgfpathlineto{\pgfqpoint{1.552251in}{2.074122in}}%
\pgfpathlineto{\pgfqpoint{1.587561in}{2.063138in}}%
\pgfpathclose%
\pgfusepath{stroke}%
\end{pgfscope}%
\begin{pgfscope}

\end{pgfscope}
\pgfpathrectangle{\pgfqpoint{0.100000in}{0.100000in}}{\pgfqpoint{5.625000in}{4.500000in}}%
\pgfusepath{clip}%
\pgfsetbuttcap%
\pgfsetroundjoin%
\pgfsetlinewidth{2pt}%
\pgfsetdash{}{0cm}
\definecolor{currentstroke}{rgb}{0.000000,1.000000,0.000000}%
\pgfsetstrokecolor{currentstroke}%
\pgfpathcircle{\pgfpoint{6.7cm}{6cm}}{2mm}
  \pgfusepath{fill}
  \pgfusepath{stroke}%
% \pgfsetlinewidth{1.505625pt}%
% \definecolor{currentstroke}{rgb}{1.000000,0.000000,0.000000}%
% \pgfsetstrokecolor{currentstroke}%
% \pgfsetdash{}{0pt}%
% \pgfpathellipse{\pgfpoint{5.9cm}{6cm}}
%                  {\pgfpoint{.8cm}{0cm}}
%                  {\pgfpoint{0cm}{.2cm}}
%   \pgfusepath{draw}
\pgfsetdash{{0.1cm}{0.1cm}{0.1cm}{0.1cm}}{0cm}
\definecolor{currentstroke}{rgb}{1.000000,0.000000,0.000000}%
\pgfsetstrokecolor{currentstroke}%
\pgfpathmoveto{\pgfpoint{6.7cm}{6cm}}
  \pgfsetarrows{-latex}
  \pgfsetshortenstart{4pt}
\pgfpatharc{50}{90}{70cm/2cm}
  \pgfusepath{draw}
\pgfusepath{stroke}%
\pgfsetarrows{-}
\pgfsetarrowsstart{}
\pgfsetshortenstart{0pt}
\pgfsetdash{{0.1cm}{0.1cm}{0.1cm}{0.1cm}}{0cm}
\definecolor{currentstroke}{rgb}{1.000000,0.000000,0.000000}%
\pgfpathmoveto{\pgfpoint{5.1cm}{6cm}}
\pgfpatharc{130}{90}{70cm/2cm}
  \pgfusepath{draw}
\pgfusepath{stroke}%
\pgfsetdash{}{0cm}
\definecolor{currentstroke}{rgb}{1.000000,0.000000,0.000000}%
\pgfpathmoveto{\pgfpoint{6.7cm}{6cm}}
\pgfpatharc{140}{90}{-60cm/2cm}
  \pgfusepath{draw}
\pgfusepath{stroke}%
  \pgfsetarrows{-latex}
  \pgfsetshortenstart{4pt}
\pgfpathmoveto{\pgfpoint{5.1cm}{6cm}}
\pgfpatharc{40}{90}{-60cm/2cm}
  \pgfusepath{draw}
\pgfusepath{stroke}%
\end{pgfpicture}%
\makeatother%
\endgroup%
}
\caption{Real picture of $f^{-1}(\frac{2}{27})$}
\label{fig:cubica}
\end{figure}
\end{centering}

\bibliographystyle{amsplain}
%\bibliography{eva-ji}
\providecommand{\bysame}{\leavevmode\hbox to3em{\hrulefill}\thinspace}
\providecommand{\MR}{\relax\ifhmode\unskip\space\fi MR }
% \MRhref is called by the amsart/book/proc definition of \MR.
\providecommand{\MRhref}[2]{%
  \href{http://www.ams.org/mathscinet-getitem?mr=#1}{#2}
}
\providecommand{\href}[2]{#2}

\end{document}